\newtheorem{thm}{Theorem}
\newtheorem{defn}{Definition}
\newtheorem{lemma}{Lemma}
\newtheorem{pro}{Proposition}
\newtheorem{rk}{Remark}
\numberwithin{equation}{section} \setcounter{tocdepth}{1}
\begin{document}
\title [DYNAMICAL SYSTEM OF MOSQUITO POPULATION]
{ Dynamical system of a mosquito population with distinct birth-death rates}

\author{  Z.S. Boxonov, U.A. Rozikov}

\address{Z.\ S.\ Boxonov\\ Institute of mathematics, 81, Mirzo Ulug`bek str., 100170,
Tashkent, Uzbekistan.}
 \email {z.b.x.k@mail.ru}

\address{U.\ A.\ Rozikov \\ Institute of mathematics, 81, Mirzo Ulug`bek str., 100170,
Tashkent, Uzbekistan.} \email {u.rozikov@mathinst.uz}

\begin{abstract} We study the discrete-time dynamical systems of a model of
wild mosquito population with distinct birth (denoted by $\beta$) and death (denoted by $\mu$) rates. The case $\beta=\mu$ was considered in our previous work.
In this paper we prove that for $\beta<\mu$ the mosquito population will die and for $\beta>\mu$ the population will survive,  namely, the number of  the larvaes goes to infinite and the number of adults has finite limit ${\alpha\over \mu}$, where $\alpha>0$ is the maximum emergence rete.
\end{abstract}

\subjclass[2010] {92D25 (34C60 34D20 92D30 92D40)}

\keywords{mosquito; population;  fixed point;
periodic point; limit point.} \maketitle

\section{Introduction}

In \cite{L}, \cite{A}, \cite{B}, \cite{J}, \cite{RV} and \cite{Rpd} (see also references therein)
several kind of mathematical models of mosquito population are studied.
This paper is a continuation of our paper \cite{RB}, where
following \cite{J.Li} it was considered a model of the mosquito population.
  In the model a wild mosquito population without the presence of
sterile mosquitoes is considered. For the simplified stage-structured
mosquito population,  due to
the fact that the first three stages in a mosquito's life cycle are aquatic, it was grouped the
three aquatic stages of mosquitoes into one class and divide the mosquito population
into only two classes, one of which consists of the first three stages that is
called the larvae, denoted by $x$, and one of which consists of all adults, denoted
by $y$.

The birth rate is the oviposition rate of adults denoted by
$\beta(\cdot)$; let the rate of emergence from larvae to adults be a
function of the larvae with the form of $\alpha(1-k(x))$,
where $\alpha>0$ is the maximum emergence rete, $0\leq
k(x)\leq 1$, with $k(0)=0, k'(x)>0$, and $\lim_{x\rightarrow
\infty}k(x)=1$, is the functional response due to
the intraspecific competition \cite{J}. Let the death rate
of larvae be $d_{0}+d_{1}x$, and
the death rate of adults be constant $\mu$. Then in the absence of sterile mosquitoes, and
in case (as in \cite{J}) $k(x)=\frac{x}{1+x}$,  $\beta(\cdot)=\beta$
the interactive dynamics for the wild mosquitoes are governed by the following system:
\begin{equation}\label{system}
\left\{%
\begin{array}{ll}
    \frac{dx}{dt}=\beta y-\frac{\alpha x}{1+x}-(d_{0}+d_{1}x)x,\\[3mm]
    \frac{dy}{dt}=\frac{\alpha x}{1+x}-\mu y.
\end{array}%
\right.\end{equation}
Denote
\begin{equation}\label{cont.thm}r_{0}=\frac{\alpha\beta}{(\alpha+d_{0})\mu}.
\end{equation}
Theorem 3.1 in \cite{J.Li} states  that:
\begin{itemize}
\item[-] if $r_{0}\leq1$ then the trivial equilibrium $(0;0)$ of system (\ref{system})
is a globally asymptotically stable, and there is no positive equilibrium.

\item[-] if $r_{0}>1$ then the trivial equilibrium $(0;0)$ is unstable, and there exists a unique positive equilibrium $(x_0, y_0)$ with
$$x_0=\frac{\sqrt{(d_{0}+d_{1})^{2}-4d_{1}(\alpha+d_{0})(1-r_{0})}-d_{0}-d_{1}}{2d_{1}}, \ \ y_{0}=\frac{\alpha x_0}{\mu(1+x_0)},$$ which is a globally asymptotically stable.
\end{itemize}

Define the operator $W:\mathbb{R}^{2}\rightarrow \mathbb{R}^{2}$ by
\begin{equation}\label{systema}
\left\{%
\begin{array}{ll}
    x'=\beta y-\frac{\alpha x}{1+x}-(d_{0}+d_{1}x)x+x,\\[3mm]
    y'=\frac{\alpha x}{1+x}-\mu y+y
\end{array}%
\right.\end{equation}
where $\alpha >0, \beta >0, \mu >0,\ d_{0}\geq0,\ d_{1}\geq0.$

In this paper we study the discrete time dynamical systems generated by (\ref{systema}).
In \cite{RB} for the evolution operator (\ref{systema}) the following results are obtained:

\begin{itemize}
\item[-] all fixed points of the evolution operator are found.
Depending on the parameters the operator may have unique, two and infinitely many fixed points;

\item[-] under some conditions on parameters type of each fixed point is determined and the
limit points of the dynamical system are given.

\item[-] for the case $\beta=\mu, \ \ d_{0}=d_{1}=0$  of parameters the full
analysis of corresponding dynamical system is given.\\
\end{itemize}
In this paper we consider the operator $W$ (defined by (\ref{systema})) for the case  $\beta\neq\mu, \ \ d_{0}=d_{1}=0$ and our
aim is to study trajectories of any initial point
from the invariant (under $W$) set $\mathbb{R}^2_{+}$.
The case when $d_{0}\ne 0$ or $d_{1}\ne 0$ is not studied yet.

\section{Dynamics for $\beta\neq\mu, \ \ d_{0}=d_{1}=0$.}
We assume
\begin{equation}\label{par}
\beta\neq\mu, \ \ d_{0}=d_{1}=0
\end{equation}
 then (\ref{systema}) has the following form
\begin{equation}\label{systemacase1}
W_{0}:\left\{%
\begin{array}{ll}
    x'=\beta y-\frac{\alpha x}{1+x}+x,\\[2mm]
    y'=\frac{\alpha x}{1+x}-\mu y+y.
\end{array}%
\right.\end{equation}
\begin{rk} Note that the operator $W_0$ well defined on $\mathbb{R}^2\setminus \{x=-1\}$. But to 
define a  dynamical system  of continuous operator and interpret values of $x$ and $y$ as probabilities we assume
$x\geq 0$ and $y\geq 0$. Therefore, we choose parameters of the operator $W_0$ to guarantee that it maps  $\mathbb{R}_{+}^{2}$ to itself.
\end{rk}

 It is easy to see that if
\begin{equation}\label{parametr}
0<\alpha\leq1, \ \ \beta >0, \ \ 0<\mu\leq1
\end{equation}
then operator (\ref{systemacase1}) maps $\mathbb{R}_{+}^{2}$ to itself.

A point $z\in\mathbb{R}_{+}^{2}$ is called a fixed point of $W_{0}$ if
$W_{0}(z)=z$.

For fixed point of $W_{0}$ the following holds.
\begin{pro}\label{lemmafix} The operator $W_0$ has a unique fixed point $z=(0; 0)$ in $\mathbb{R}_{+}^{2}$.
\end{pro}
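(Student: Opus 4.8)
The plan is to reduce the fixed point condition $W_0(z)=z$ to a small algebraic system and exploit the hypothesis $\beta\neq\mu$ directly. Writing $z=(x,y)\in\mathbb{R}_{+}^{2}$ and imposing $x'=x$ and $y'=y$ in \eqref{systemacase1}, the $+x$ and $+y$ terms cancel on each side, so the fixed point equations collapse to
\begin{equation}\label{fixeq}
\beta y=\frac{\alpha x}{1+x},\qquad \mu y=\frac{\alpha x}{1+x}.
\end{equation}
The point of this reduction is that the common right-hand side $\frac{\alpha x}{1+x}$ is shared by both equations, which is exactly what makes the hypothesis $\beta\neq\mu$ bite.

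Next I would equate the two left-hand sides in \eqref{fixeq}, since both equal $\frac{\alpha x}{1+x}$. This gives $\beta y=\mu y$, i.e. $(\beta-\mu)y=0$. Because $\beta\neq\mu$ by the standing assumption \eqref{par}, the factor $\beta-\mu$ is nonzero and we may divide it out to conclude $y=0$. Substituting $y=0$ back into either equation of \eqref{fixeq} yields $\frac{\alpha x}{1+x}=0$. Since $\alpha>0$ and we are working on $\mathbb{R}_{+}^{2}$ (so $1+x>0$), the fraction vanishes if and only if $x=0$. Hence $z=(0,0)$, and conversely one checks immediately that $(0,0)$ satisfies \eqref{fixeq}, establishing both existence and uniqueness.

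There is no genuine analytic obstacle here; the only subtle point is that the entire argument hinges on $\beta\neq\mu$, which is precisely the regime studied in this section. I would flag that if instead $\beta=\mu$, then \eqref{fixeq} becomes a single equation $\beta y=\frac{\alpha x}{1+x}$ whose solution set is a curve rather than a point, which is consistent with the possibility of infinitely many fixed points recorded for the companion case in \cite{RB}. Thus the uniqueness asserted in Proposition~\ref{lemmafix} is a genuinely $\beta\neq\mu$ phenomenon, and the proof should make the use of that hypothesis explicit at the step $(\beta-\mu)y=0$.
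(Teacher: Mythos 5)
Your proof is correct and follows the same route as the paper, which simply writes down the fixed-point system and asserts ``it is easy to see that $x=0$, $y=0$''; you supply exactly the omitted details (cancelling the $+x$, $+y$ terms, deducing $(\beta-\mu)y=0$ from the common right-hand side, and using $\beta\neq\mu$ and $\alpha>0$ to force $y=0$ and then $x=0$). Your observation that the uniqueness genuinely depends on $\beta\neq\mu$, with a curve of fixed points appearing when $\beta=\mu$, is a correct and worthwhile clarification of where the hypothesis enters.
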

\begin{proof} We need to solve
\begin{equation}\label{Fsystema}
\left\{%
\begin{array}{ll}
    x=\beta y-\frac{\alpha x}{1+x}+x,\\[3mm]
    y=\frac{\alpha x}{1+x}-\mu y+y.
\end{array}%
\right.\end{equation}
It is easy to see that $x=0, y=0.$
\end{proof}

Now we shall examine the type of the fixed point.
\begin{defn}\label{d1}
(see\cite{D}) A fixed point $s$ of an operator $W$ is called
hyperbolic if its Jacobian $J$ at $s$ has no eigenvalues on the
unit circle.
\end{defn}

\begin{defn}\label{d2}
(see\cite{D}) A hyperbolic fixed point $s$ called:

1) attracting if all the eigenvalues of the Jacobi matrix $J(s)$
are less than 1 in absolute value;

2) repelling if all the eigenvalues of the Jacobi matrix $J(s)$
are greater than 1 in absolute value;

3) a saddle otherwise.
\end{defn}
To find the type of a fixed point of the operator (\ref{systemacase1})
we write the Jacobi matrix:

$$J(z)=J_{W_{0}}=\left(%
\begin{array}{cc}
  1-\alpha & \beta \\
  \alpha & 1-\mu \\
\end{array}%
\right).$$
The eigenvalues of the Jacobi matrix are
$$\lambda_{1}=\frac{1}{2}\left(2-\alpha-\mu+ \sqrt{(\alpha-\mu)^2+4\alpha\beta}\right).$$
$$\lambda_{2}=\frac{1}{2}\left(2-\alpha-\mu- \sqrt{(\alpha-\mu)^2+4\alpha\beta}\right).$$

For type of $z=(0,0)$ the following lemma holds.
\begin{pro}\label{atr} The type of the fixed point $z$  for (\ref{systemacase1}) are as follows:
\begin{itemize}
  \item[i)] if\ $\beta<\mu$ then $z$ is attracting;
  \item[ii)]if\ $\beta>\mu$ then $z$ is saddle;
  \end{itemize}
\end{pro}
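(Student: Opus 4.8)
The plan is to classify $z=(0,0)$ purely from the location of the two eigenvalues $\lambda_1,\lambda_2$ relative to the unit circle, using the explicit formulas already displayed. First I would record that the discriminant
\[
\Delta=(\alpha-\mu)^2+4\alpha\beta
\]
is strictly positive (since $\alpha,\beta>0$), so $\lambda_1,\lambda_2$ are real and distinct with $\lambda_1>\lambda_2$; hyperbolicity then follows once I check that neither equals $\pm1$. The whole argument rests on one pivotal equivalence, obtained by isolating the radical: because $2-\alpha-\mu\ge 0$ and $\alpha+\mu>0$,
\[
\lambda_1>1 \iff \sqrt{\Delta}>\alpha+\mu \iff (\alpha-\mu)^2+4\alpha\beta>(\alpha+\mu)^2 \iff \beta>\mu .
\]
Equivalently, evaluating the characteristic polynomial at $1$ gives $p(1)=(1-\lambda_1)(1-\lambda_2)=\det J-\operatorname{tr}J+1=\alpha(\mu-\beta)$, whose sign is governed exactly by $\beta$ versus $\mu$. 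This single computation separates the two cases.

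For part (i), $\beta<\mu$, I would establish the four one-sided inequalities that place both eigenvalues in $(-1,1)$. The equivalence above gives $\lambda_1<1$; since the trace $2-\alpha-\mu\ge 0$ forces $\lambda_1>0$, we get $-1<\lambda_1<1$, and $\lambda_2\le\lambda_1<1$ handles the upper bound for $\lambda_2$. The remaining lower bound $\lambda_2>-1$ reduces, after moving $\sqrt{\Delta}$ to one side and squaring the (legitimately positive) inequality $4-\alpha-\mu>\sqrt{\Delta}$, to
\[
\beta<\frac{4-2\alpha-2\mu+\alpha\mu}{\alpha},
\]
and this is implied by $\beta<\mu\le 1$ together with $\alpha+\mu\le 2$. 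Hence both eigenvalues lie strictly inside the unit disk and $z$ is attracting.

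For part (ii), $\beta>\mu$, the same equivalence gives $\lambda_1>1$, so $|\lambda_1|>1$; and $\lambda_2<1$ always holds since $-\sqrt{\Delta}<\alpha+\mu$. To conclude that $z$ is a saddle rather than repelling, one must still verify $|\lambda_2|<1$, i.e. $\lambda_2>-1$. This is the step I expect to be the main obstacle, and it is \emph{not} automatic: the reduction above shows $\lambda_2>-1$ is equivalent to the very same bound $\beta<(4-2\alpha-2\mu+\alpha\mu)/\alpha$, which in part (i) came for free from $\beta<\mu$ but here is a genuine restriction. Indeed for large $\beta$ it fails; for instance at $\alpha=\mu=1$ one has $\lambda_{1,2}=\pm\sqrt{\beta}$, so every $\beta>1$ makes $z$ repelling, not a saddle. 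I therefore anticipate that the clean statement of (ii) needs the extra upper bound $\mu<\beta<(4-2\alpha-2\mu+\alpha\mu)/\alpha$ (a nonempty range precisely when $\alpha+\mu<2$); within it $-1<\lambda_2<1<\lambda_1$ gives the saddle, and I would single out the verification of $\lambda_2>-1$ as the delicate point of the whole proposition.
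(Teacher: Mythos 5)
Your approach is the same as the paper's (reduce everything to the position of the two real eigenvalues relative to $\pm1$ by isolating the radical), and your treatment of part (i) matches the paper's argument: from $\alpha+\mu\le 2$ and $\beta<\mu$ one gets $\sqrt{(\alpha-\mu)^2+4\alpha\beta}<\alpha+\mu$, hence both eigenvalues lie in $(0,1)$. The difference is in part (ii), and there you have caught something real. The paper disposes of this case in one sentence: ``we have $\lambda_1>1$ but $|\lambda_2|<1$, therefore the fixed point is a saddle,'' offering no justification for $|\lambda_2|<1$. Your computation shows that $\lambda_2>-1$ is equivalent to $\alpha\beta<4-2\alpha-2\mu+\alpha\mu$, which under the standing hypotheses (\ref{parametr}) (where $\beta>0$ is otherwise unrestricted) is a genuine additional constraint, and your counterexample is valid: for $\alpha=\mu=1$ the eigenvalues are $\pm\sqrt{\beta}$, so any $\beta>1$ satisfies $\beta>\mu$ yet makes $(0,0)$ repelling, not a saddle. (Equivalently, $p(-1)=\det J+\operatorname{tr}J+1=4-2\alpha-2\mu+\alpha\mu-\alpha\beta$ changes sign exactly at your threshold.) So part (ii) of the proposition, and the paper's proof of it, are correct only under the supplementary bound $\beta<(4-2\alpha-2\mu+\alpha\mu)/\alpha$ that you identify; your proposal is the more careful argument. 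The only caveat worth adding is that this gap is local to the classification of the fixed point: the paper's later global result (Theorem \ref{pr}) for $\beta>\mu$ does not rely on the saddle classification, so the error does not propagate.
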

\begin{proof} For attractiveness we should have 
\begin{equation}\label{l12}
|\lambda_{1,2}|=\left|\frac{1}{2}\left(2-\alpha-\mu\pm \sqrt{(\alpha-\mu)^2+4\alpha\beta}\right)\right|<1.
\end{equation}
The inequality (\ref{l12}) is equivalent to the following
\begin{equation}\label{l12at}
\left\{%
\begin{array}{ll}
    \alpha+\mu+\sqrt{(\alpha-\mu)^2+4\alpha\beta}<4,\\[3mm]
    0<\alpha+\mu-\sqrt{(\alpha-\mu)^2+4\alpha\beta}<4.
\end{array}%
\right.\end{equation}
From the condition (\ref{parametr}) of the parameters $\alpha,\mu$ it follows that $\alpha+\mu\leq2$. 
If  $\beta<\mu$ then $\alpha+\mu>\sqrt{(\alpha-\mu)^2+4\alpha\beta}$ and the system of inequalities (\ref{l12at}) holds.
In the case $\beta>\mu$ we have $\lambda_1>1$ but $|\lambda_2|<1$ therefore the fixed point is a saddle point. 
\end{proof}

The following theorem  describes the trajectory of any point $(x^{(0)}, y^{(0)})$ in $\mathbb{R}^2_{+}$.
\begin{thm}\label{pr} For the operator $W_{0}$ given by (\ref{systemacase1}) (i.e. under condition (\ref{par})) 
and for any initial point $(x^{(0)}, y^{(0)})\in \mathbb R^2_+$ the following hold
$$\lim_{n\to \infty}x^{(n)}=\left\{\begin{array}{ll}
0, \ \ \ \ \ \ \mbox{if} \ \ \beta< \mu, \\[2mm]
+\infty, \ \ \mbox{if} \ \ \beta> \mu
\end{array}\right.$$
$$\lim_{n\to \infty}y^{(n)}=\left\{\begin{array}{ll}
0, \ \ \ \ \ \ \mbox{if} \ \ \beta< \mu, \\[2mm]
\frac{\alpha}{\mu}, \ \ \ \ \  \mbox{if} \ \ \beta> \mu
\end{array}\right.$$
where $(x^{(n)}, y^{(n)})=W_0^n(x^{(0)}, y^{(0)})$, with $W_{0}^n$ is $n$-th iteration of $W_{0}$.
\end{thm}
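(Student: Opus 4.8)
The plan rests on one exact identity: adding the two coordinates of $W_0$ gives
\[
x^{(n+1)}+y^{(n+1)}=x^{(n)}+y^{(n)}+(\beta-\mu)y^{(n)},
\]
so if I set $s_n:=x^{(n)}+y^{(n)}$ then $s_{n+1}-s_n=(\beta-\mu)y^{(n)}$. Since $y^{(n)}\ge 0$, the scalar sequence $(s_n)$ is non-increasing when $\beta<\mu$ and non-decreasing when $\beta>\mu$; this single monotone quantity drives both cases. I would also record a one-sided bound on $y$: because $\frac{\alpha x}{1+x}<\alpha$ for $x\ge 0$ and $0\le 1-\mu<1$, the recursion $y^{(n+1)}=\frac{\alpha x^{(n)}}{1+x^{(n)}}+(1-\mu)y^{(n)}$ gives $y^{(n+1)}<\alpha+(1-\mu)y^{(n)}$; comparing with the scalar iteration $u\mapsto\alpha+(1-\mu)u$, whose fixed point is $\alpha/\mu$, yields $\limsup_n y^{(n)}\le \alpha/\mu$, and in particular that $(y^{(n)})$ is bounded.

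For $\beta<\mu$ the argument is then short. Here $(s_n)$ is non-increasing and bounded below by $0$, hence convergent; from $s_{n+1}-s_n=(\beta-\mu)y^{(n)}\to 0$ I get $y^{(n)}\to 0$, and therefore $x^{(n)}=s_n-y^{(n)}$ also converges. Thus the whole orbit converges to a point, which by continuity of $W_0$ must be a fixed point, and by Proposition \ref{lemmafix} the only fixed point in $\mathbb R^2_+$ is $(0,0)$. This gives $x^{(n)}\to 0$ and $y^{(n)}\to 0$.

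For $\beta>\mu$ the sequence $(s_n)$ is non-decreasing, so it either diverges to $+\infty$ or converges. If it converges, the same reasoning forces the orbit to converge to the unique fixed point $(0,0)$; so the theorem in this case reduces to excluding convergence to $(0,0)$ for initial data other than the origin itself (the origin is fixed and must be set aside). This is the main obstacle, since $(0,0)$ is a saddle (Proposition \ref{atr}) and abstract theory only tells me that orbits off the stable manifold escape. I would make this quantitative using the left eigenvector of the unstable eigenvalue $\lambda_1>1$. Setting $\phi(x,y):=\alpha x+(\lambda_1-1+\alpha)y$ (both coefficients positive since $\lambda_1>1$), the substitution $\frac{\alpha x}{1+x}=\alpha x-\frac{\alpha x^2}{1+x}$ gives the exact relation
\[
\phi\big(W_0(x,y)\big)=\lambda_1\,\phi(x,y)-(\lambda_1-1)\frac{\alpha x^2}{1+x}.
\]
Since $x\le \phi(x,y)/\alpha$ and $\frac{\alpha x^2}{1+x}\le \alpha x^2$, this yields $\phi(W_0(x,y))\ge \phi(x,y)\big(\lambda_1-(\lambda_1-1)\phi(x,y)/\alpha\big)$, so whenever $0<\phi(x,y)\le \alpha/2$ one has $\phi(W_0(x,y))\ge \frac{1+\lambda_1}{2}\phi(x,y)$ with $\frac{1+\lambda_1}{2}>1$. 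Because no nonzero point of $\mathbb R^2_+$ is mapped to $(0,0)$, we have $\phi^{(n)}:=\phi(x^{(n)},y^{(n)})>0$ for all $n$; if the orbit converged to $(0,0)$ then $\phi^{(n)}\le \alpha/2$ for all large $n$, and the inequality above would force $\phi^{(n)}$ to grow geometrically — a contradiction. Hence $(s_n)$ cannot converge, so $s_n\to+\infty$, and since $(y^{(n)})$ is bounded, $x^{(n)}=s_n-y^{(n)}\to+\infty$.

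Finally, once $x^{(n)}\to+\infty$ I have $\frac{\alpha x^{(n)}}{1+x^{(n)}}\to\alpha$, so writing $e_n:=y^{(n)}-\alpha/\mu$ and using $\alpha/\mu=\alpha+(1-\mu)\alpha/\mu$ turns the $y$-recursion into
\[
e_{n+1}=(1-\mu)e_n+\Big(\tfrac{\alpha x^{(n)}}{1+x^{(n)}}-\alpha\Big),
\]
with $|1-\mu|<1$ and vanishing forcing term; the standard estimate for such a linear recursion gives $e_n\to 0$, i.e. $y^{(n)}\to\alpha/\mu$. The decisive step is the Lyapunov estimate for $\phi$ in the $\beta>\mu$ case; everything else is either the monotone-sum bookkeeping or a routine limit in a contractive scalar recursion.
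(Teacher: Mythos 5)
Your proof is correct, and for the hard half ($\beta>\mu$) it takes a genuinely different route from the paper. For $\beta<\mu$ both arguments are Lyapunov-type: the paper uses two monotone decreasing linear combinations, $x^{(n)}+y^{(n)}$ and $kx^{(n)}+y^{(n)}$ with $k\beta=\mu$, to force convergence of each coordinate separately, while you use only the sum $s_n=x^{(n)}+y^{(n)}$ together with the observation that $s_{n+1}-s_n=(\beta-\mu)y^{(n)}\to 0$ already yields $y^{(n)}\to 0$; that is a mild simplification of the same idea. For $\beta>\mu$ the paper proceeds through Lemma \ref{sequence}, a five-part case analysis of the possible monotonicity patterns of $(x^{(n)})$ and $(y^{(n)})$, concluding that both sequences are eventually increasing and then telescoping the identity (\ref{rec}) to get unboundedness of $x^{(n)}$; you instead build the function $\phi(x,y)=\alpha x+(\lambda_1-1+\alpha)y$ from the left eigenvector of the unstable eigenvalue $\lambda_1>1$ of the Jacobian at the saddle, derive the exact relation $\phi(W_0(x,y))=\lambda_1\phi(x,y)-(\lambda_1-1)\frac{\alpha x^2}{1+x}$, and deduce the expansion estimate $\phi(W_0(x,y))\ge\frac{1+\lambda_1}{2}\phi(x,y)$ near the origin, which rules out convergence of a nonzero orbit to the fixed point and hence forces $s_n\to+\infty$. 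I checked the eigenvector computation and the inequalities; they are right, and the argument that no nonzero point maps to the origin (add the coordinates) closes the loop. Your approach buys rigor and economy: it replaces the paper's combinatorial lemma (whose part 4, in particular, asserts monotonicity of the increments $\Delta^{(m)},\delta^{(m)}$ without full justification) with a quantitative instability estimate, and your final step $e_{n+1}=(1-\mu)e_n+o(1)\Rightarrow e_n\to0$ obtains $y^{(n)}\to\alpha/\mu$ without needing eventual monotonicity of $y^{(n)}$. One point both treatments should state explicitly: the conclusion for $\beta>\mu$ must exclude the initial point $(0,0)$ itself, which you correctly set aside; and the standing hypothesis (\ref{parametr}) is needed (you use $0\le 1-\mu<1$ and invariance of $\mathbb{R}^2_+$), even though the theorem cites only (\ref{par}).
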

\begin{proof} \textbf{1)} Let $\beta<\mu.$ Then there exists $k>1$ such that $\beta\cdot k=\mu$.
Denote
$c^{(n)}=x^{(n)}+y^{(n)}$ and $c^{(n)}_{0}=k\cdot x^{(n)}+y^{(n)}$, where $x^{(n)}, y^{(n)}$ defined by the following
\begin{equation}\label{recc}\begin{array}{ll}
x^{(n)}=\beta y^{(n-1)}-\frac{\alpha x^{(n-1)}}{1+x^{(n-1)}}+x^{(n-1)},\\[3mm]
 y^{(n)}=\frac{\alpha x^{(n-1)}}{1+x^{(n-1)}}-\mu y^{(n-1)}+y^{(n-1)}.
 \end{array}\end{equation}
Both sequences $\{c^{(n)}\}$ and $\{c^{(n)}_{0}\}$ are monotone and bounded,
i.e.,
$$0\leq...\leq c^{(n)}\leq c^{(n-1)}\leq...\leq c^{(0)},$$
$$ 0\leq...\leq c^{(n)}_{0}\leq c^{(n-1)}_{0}\leq...\leq c^{(0)}_{0}.$$
Thus $\{c^{(n)}\}$ and $\{c^{(n)}_{0}\}$ have limit point, denote the limits by $c^*$ and $c^*_{0}$ respectively. 
Consequently, the following limits exist 
$$x^*=\lim_{n\to \infty}x^{(n)}=\frac{1}{1-k}\lim_{n\to \infty}(c^{(n)}-c^{(n)}_{0})=\frac{1}{1-k}(c^*-c^*_{0}),$$
$$y^*=\lim_{n\to \infty}y^{(n)}=c^*-x^*.$$
 and by (\ref{recc}) we have
$$x^*=\beta y^*-\frac{\alpha x^*}{1+x^*}+x^*,\ \ \ y^*=\frac{\alpha x^*}{1+x^*}-\mu y^*+y^*,$$
i.e., $x^*=0, y^*=0.$
\begin{itemize}
\item[\textbf{2)}] Let $\beta>\mu.$ The proof is based on the following three lemmas.
\end{itemize}
\begin{lemma}\label{border} For any parameters satisfying (\ref{parametr}) and for arbitrary 
initial point $(x^{(0)}, y^{(0)})$ in $\mathbb{R}^2_{+}$ the sequence $y^{(n)}$ (defined in (\ref{recc})) is bounded:
$$0\leq y^{(n)}\leq \left\{\begin{array}{ll}
y^{(0)}, \ \ \mbox{if} \ \ y^{(0)}>\frac{\alpha}{\mu}\\[2mm]
\frac{\alpha}{\mu}, \ \ \mbox{if} \ \ y^{(0)}<\frac{\alpha}{\mu}.
\end{array}\right.$$
\end{lemma}
\begin{proof} Note that on the set $\mathbb{R}_{+}$ the function $y(x)=\frac{x}{1+x}$ is increasing and
 bounded by 1. Therefore
 $$y^{(n)}=\frac{\alpha x^{(n-1)}}{1+x^{(n-1)}}+(1-\mu)y^{(n-1)}\leq\alpha+(1-\mu)y^{(n-1)}\leq\alpha+(1-\mu)(\alpha+(1-\mu)y^{(n-2)})\leq$$
 $$\alpha+\alpha(1-\mu)+(1-\mu)^2(\alpha+(1-\mu)y^{(n-3)})\leq
 ...\leq\alpha+\alpha(1-\mu)+\alpha(1-\mu)^2+...+\alpha(1-\mu)^{n-1}$$ $$+(1-\mu)^{n}y^{(0)}=\frac{\alpha}{\mu}+(1-\mu)^{n}(y^{(0)}-\frac{\alpha}{\mu}).$$

 Thus if the initial point $y^{(0)}>\frac{\alpha}{\mu}$ then for any $m\in\mathbb{N}$ we have $0\leq y^{(m)}\leq y^{(0)}$. Moreover, if $y^{(0)}<\frac{\alpha}{\mu}$ then for any $m\in\mathbb{N}$ we have $0\leq y^{(m)}\leq \frac{\alpha}{\mu}.$
\end{proof}

\begin{lemma}\label{sequence} For sequences $x^{(n)}$ and $y^{(n)}$ the following statements hold:
\begin{itemize}
  \item[1)] For any $n\in\mathbb{N}$ and $\beta>\mu$ the inequalities
   $x^{(n)}>x^{(n+1)}$ and $y^{(n)}>y^{(n+1)}$ can not be satisfied at the same time.
  \item[2)] If  $x^{(m-1)}<x^{(m)}$, $y^{(m-1)}<y^{(m)}$ for some $m\in\mathbb{N}$ then $x^{(m)}<x^{(m+1)}$, $y^{(m)}<y^{(m+1)}$.
  \item[3)] If  $x^{(m-1)}>x^{(m)}$, $y^{(m-1)}<y^{(m)}$ for any $m\in\mathbb{N}$ then for  $\beta>\mu$  the inequalities $x^{(m)}>x^{(m+1)}$, $y^{(m)}<y^{(m+1)}$ can not be satisfied at the same time.
  \item[4)] If $x^{(m-1)}<x^{(m)}$, $y^{(m-1)}>y^{(m)}$ for any $m\in\mathbb{N}$  then for $\beta>\mu$ the inequalities $x^{(m)}<x^{(m+1)}$, $y^{(m)}>y^{(m+1)}$ can not be satisfied at the same time.
 \item[5)] For any $m\in\mathbb{N}$ the inequalities $x^{(m-1)}<x^{(m)}, x^{(m)}>x^{(m+1)}, y^{(m-1)}>y^{(m)}, y^{(m)}<y^{(m+1)}$  can not be satisfied at the same time.
\end{itemize}
\end{lemma}
\begin{proof} Adding $x^{(n)}$ and $y^{(n)}$ we get
\begin{equation}\label{rec}
x^{(n)}+y^{(n)}=(\beta-\mu)y^{n-1}+x^{(n-1)}+y^{(n-1)}
\end{equation}
\begin{itemize}
  \item[1)] From (\ref{rec}) by $\beta>\mu$ we get $(x^{(n)}-x^{(n-1)})+(y^{(n)}-y^{(n-1)})>0.$
  Consequently, $x^{(n)}>x^{(n+1)}$ and $y^{(n)}>y^{(n+1)}$ can not be satisfied at the same time.
  \item[2)] By $x^{(m)}-x^{(m-1)}>0$ we have
  $$x^{(m)}(1-\frac{\alpha}{1+x^{(m)}})-x^{(m-1)}(1-\frac{\alpha}{1+x^{(m-1)}})>0$$ and
  $$\frac{x^{(m)}}{1+x^{(m)}}>\frac{x^{(m-1)}}{1+x^{(m-1)}}.$$
 Then
 $$x^{(m+1)}-x^{(m)}=\beta(y^{(m)}-y^{(m-1)})+x^{(m)}(1-\frac{\alpha}{1+x^{(m)}})-x^{(m-1)}(1-\frac{\alpha}{1+x^{(m-1)}})>0,$$ $$y^{(m+1)}-y^{(m)}=\alpha(\frac{x^{(m)}}{1+x^{(m)}}-\frac{x^{(m-1)}}{1+x^{(m-1)}})+(1-\mu)(y^{(m)}-y^{(m-1)})>0.$$
  \item[3)] Assume  in case when $x^{(m-1)}>x^{(m)}$, $y^{(m-1)}<y^{(m)}$ hold for any $m\in\mathbb{N}$ then  for  $\beta>\mu$ the inequalities  $x^{(m)}>x^{(m+1)}$, $y^{(m)}<y^{(m+1)}$ are satisfied at the same time.
      Then since $x^{(n)}$ is decreasing and bounded; $y^{(n)}$ is increasing and bounded (see Lemma \ref{border}) there exist their limits $x^*$, $y^*\neq0$ respectively. By (\ref{recc}) we obtain
$$\left\{
  \begin{array}{ll}
   \beta y^* =\frac{\alpha x^*}{1+x^*}\\[2mm]
   \frac{\alpha x^*}{1+x^*}=\mu y^*\\
  \end{array}
\right.$$ i.e. $x^*=0, y^*=0.$ This contradiction shows that if for any $m\in\mathbb{N}$ one has  $x^{(m-1)}>x^{(m)}$, $y^{(m-1)}<y^{(m)}$ then for  $\beta>\mu$ the inequalities $x^{(m)}>x^{(m+1)}$, $y^{(m)}<y^{(m+1)}$ can not be satisfied at the same time (see Fig \ref{fig.1}).
    \item[4)] Assume if for any $m\in\mathbb{N}$ the inequalities  $x^{(m-1)}<x^{(m)}$, $y^{(m-1)}>y^{(m)}$ hold then for  $\beta>\mu$ the inequalities $x^{(m)}<x^{(m+1)}$, $y^{(m)}>y^{(m+1)}$ are satisfied at the same time, i.e. $x^{(m)}$ is increasing and $y^{(m)}$ is decreasing.
       Let
       $$x^{(m+1)}-x^{(m)}=\Delta^{(m)}, \ \ y^{(m+1)}-y^{(m)}=-\delta^{(n)},$$
       $$(x^{(m+1)}-x^{(m)})+(y^{(m+1)}-y^{(m)})=\Delta^{(m)}-\delta^{(m)}<(\beta-\mu)y^{(0)}.$$

Since $\{\Delta^{(m)}\}$ is decreasing, $\{\delta^{(m)}\}$ is increasing and  $\Delta^{(m)}-\delta^{(m)}>0$ for $\beta>\mu$ we conclude that the sequence   $\{\Delta^{(m)}-\delta^{(n)}\}$ is decreasing and bounded from below. Thus $\{\Delta^{(m)}-\delta^{(m)}\}$ has a limit and since $y^{(m)}$ has limit we conclude that $x^{(m)}$ has a finite limit. By (\ref{recc}) we have
$$\lim_{m\rightarrow\infty}x^{(m)}=0, \ \ \lim_{m\rightarrow\infty}y^{(m)}=0.$$
But this is a contradiction to $\lim_{m\rightarrow\infty}x^{(m)}\neq0$. This completes proof of part 4 (see Fig \ref{fig.2}).
    \item[5)] Assume for any $m\in\mathbb{N}$ one has  $x^{(m-1)}<x^{(m)}, x^{(m)}>x^{(m+1)}, y^{(m-1)}>y^{(m)}, y^{(m)}<y^{(m+1)}$. Then $x^{(m-1)}<x^{(m+1)}, y^{(m-1)}>y^{(m+1)}$. Moreover, for each $k\in\mathbb{N}$ for $m=2k$ we have $x^{(2k-1)}<x^{(2k+1)}, y^{(2k-1)}>y^{(2k+1)}$, and for  $m=2k+1$ we have  $x^{(2k)}<x^{(2k+2)}, y^{(2k)}>y^{(2k+2)}$. But by Lemma \ref{sequence} parts 3) and 4) these inequalities do not hold for any $k\in\mathbb{N}$ (see Fig \ref{fig.3}).
\end{itemize}
\end{proof}

\begin{figure}[h!]
\includegraphics[width=0.6\textwidth]{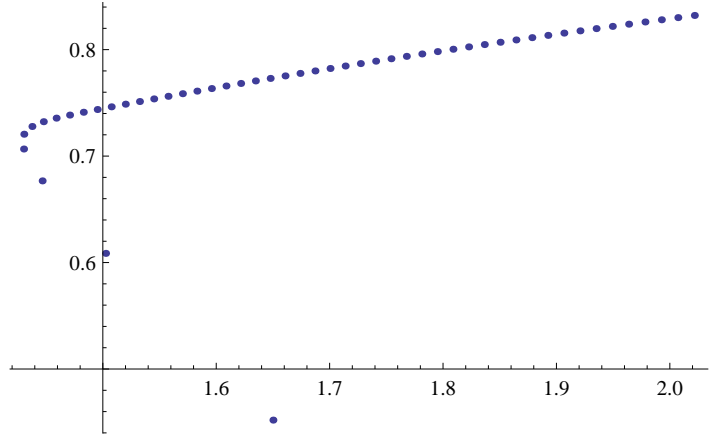}\\
\caption{$\alpha=0.6, \beta=0.5, \mu=0.48, x^{(0)}=2, y^{(0)}=0.1$}\label{fig.1}
\end{figure}

\begin{figure}[h!]
\includegraphics[width=0.6\textwidth]{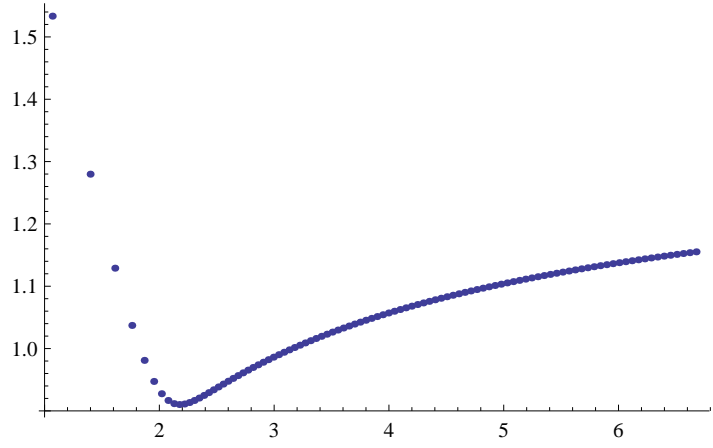}\\
\caption{$\alpha=0.4, \beta=0.35, \mu=0.3, x^{(0)}=0.5, y^{(0)}=2$}\label{fig.2}
\end{figure}

\begin{figure}[h!]
  \includegraphics[width=0.6\textwidth]{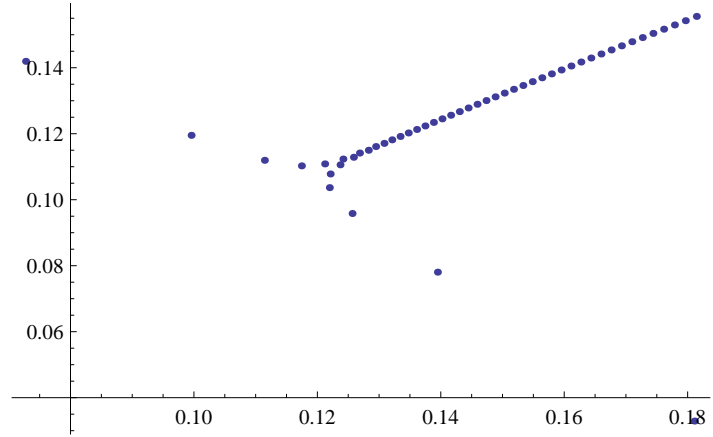}\\
  \caption{$\alpha=0.9, \beta=0.9, \mu=0.88, x^{(0)}=0.01, y^{(0)}=0.2$}\label{fig.3}
\end{figure}

\begin{lemma} There exists $n_0$ such that the sequences $x^{(n)}$ and $y^{(n)}$  are increasing for $n\geq n_0$ and $x^{(n)}$ is unbounded from above.
\end{lemma}
\begin{proof} Monotonicity of $x^{(n)}$ and $y^{(n)}$  follow from Lemma \ref{sequence}.
Consider
\begin{equation}\label{xsys}
\left\{
  \begin{array}{ll}
   (x^{(n_{0}+1)}-x^{(n_{0})})+(y^{(n_{0}+1)}-y^{(n_{0})})=(\beta-\mu)y^{(n_{0})} \\[2mm]
   (x^{(n_{0}+2)}-x^{(n_{0}+1)})+(y^{(n_{0}+2)}-y^{(n_{0}+1)})=(\beta-\mu)y^{(n_{0}+1)} \\
     ... \\
   (x^{(n-1)}-x^{(n-2)})+(y^{(n-1)}-y^{(n-2)})=(\beta-\mu)y^{(n-2)}\\[2mm]
   (x^{(n)}-x^{(n-1)})+(y^{(n)}-y^{(n-1)})=(\beta-\mu)y^{(n-1)}
  \end{array}
\right.
\end{equation}
Adding equations of (\ref{xsys}) we get
\begin{equation}\label{xsyst}
(x^{(n)}-x^{(n_{0})})+(y^{(n)}-y^{(n_{0})})=(\beta-\mu)(y^{(n_{0})}+y^{(n_{0}+1)}+...+y^{(n-2)}+y^{(n-1)})
\end{equation}
Let $y^{(n)}$ (see Lemma \ref{border}) is bounded by $\theta$.
By (\ref{xsyst}) we have $$x^{(n)}=x^{(n_{0})}+y^{(n_{0})}-y^{(n)}+(\beta-\mu)(y^{(n_{0})}+y^{(n_{0}+1)}+...+y^{(n-2)}+y^{(n-1)})$$ $$
>x^{(n_{0})}+y^{(n_{0})}-\theta+(\beta-
\mu)(n-n_{0})y^{(n_{0})}.$$

For $\beta>\mu$ from $$\lim_{n\rightarrow\infty}(x^{(n_{0})}+y^{(n_{0})}-\theta+(\beta-
\mu)(n-n_{0})y^{(n_{0})})=+\infty$$ it follows that  $x^{(n)}$ is not bounded from above.
\end{proof}
Thus for $\beta>\mu$ the sequence $y^{(n)}$ has limit $y^*$  (see Lemma \ref{border}).
 Consequently, by (\ref{recc}) and $\lim_{n\rightarrow\infty}x^{(n)}=+\infty $ we get $y^*=\frac{\alpha}{\mu}$.
 Theorem is proved.
\end{proof}

\emph{Biological interpretation} of our result is clear:  for $\beta<\mu$ the 
mosquito population will die and for $\beta>\mu$ the population will survive. Comparing our results of Theorem \ref{pr} with results of the continuous time dynamics (mentioned in the Introduction) one can see that they are the same.\\

A point $z$ in $W_{0}$ is called periodic point of $W_{0}$ if there exists $p$ so that $W_{0}^{p}(z)=z$. The smallest positive integer $p$ satisfying $W_{0}^{p}(z)=z$ is called the prime period or least period of the point $z.$

\begin{thm} For $p\geq 2$ the operator (\ref{systemacase1}) does not have any $p$-periodic point in the set $\mathbb{R}^{2}_{+}.$
\end{thm}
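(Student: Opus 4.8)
The plan is to exploit the additive identity (\ref{rec}) already derived for the sum $c^{(n)}=x^{(n)}+y^{(n)}$, together with the positivity of the $y$-coordinate on the invariant set $\mathbb{R}^2_+$. Suppose, for contradiction, that $z=(x^{(0)},y^{(0)})\in\mathbb{R}^2_+$ is a periodic point of prime period $p\geq 2$, and write its orbit as $(x^{(n)},y^{(n)})=W_0^n(z)$ for $n=0,1,\ldots,p$, so that $(x^{(p)},y^{(p)})=(x^{(0)},y^{(0)})$. Since $\mathbb{R}^2_+$ is invariant under $W_0$ (guaranteed by (\ref{parametr})), every orbit point satisfies $y^{(n)}\geq 0$.

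First I would rewrite (\ref{rec}) in the telescoping form $c^{(n)}-c^{(n-1)}=(\beta-\mu)y^{(n-1)}$. Summing this relation over one full period $n=1,\ldots,p$ gives
\begin{equation*}
c^{(p)}-c^{(0)}=(\beta-\mu)\sum_{i=0}^{p-1}y^{(i)}.
\end{equation*}
By periodicity the left-hand side vanishes, and since condition (\ref{par}) imposes $\beta\neq\mu$, we conclude $\sum_{i=0}^{p-1}y^{(i)}=0$.

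Because each term $y^{(i)}$ is nonnegative, the vanishing of the sum forces $y^{(i)}=0$ for every $i=0,\ldots,p-1$. Substituting this into the second line of the recurrence (\ref{recc}) yields $0=y^{(i+1)}=\frac{\alpha x^{(i)}}{1+x^{(i)}}$ for each $i$; since $\alpha>0$ this is possible only when $x^{(i)}=0$. Hence the entire orbit collapses to the single point $(0,0)$, whose prime period is $1$, contradicting $p\geq 2$. Therefore $W_0$ admits no $p$-periodic point for $p\geq 2$.

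The argument has essentially no hard step: the only point requiring care is that the telescoping identity relies solely on $\beta\neq\mu$, not on the sign of $\beta-\mu$, so the same computation disposes of both the $\beta<\mu$ and $\beta>\mu$ regimes simultaneously, and the positivity of $y$ on $\mathbb{R}^2_+$ then does all the remaining work. The one subtlety worth flagging is that the conclusion is really about prime period: the fixed point $(0,0)$ of course satisfies $W_0^p(0,0)=(0,0)$, but it has least period $1$, so it does not count as a genuine $p$-periodic point.
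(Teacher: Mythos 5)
Your proof is correct, and it takes a genuinely different and more elementary route than the paper. The paper first observes that the statement follows from Theorem \ref{pr} (convergent or divergent trajectories cannot be periodic), and then gives an alternative proof in two stages: an explicit computation showing that $W_0(W_0(z))=z$ forces $z=(0,0)$, followed by a normalization of $W_0$ to a one-dimensional continuous map $T$ on $S^*=[0,1]$ and an appeal to Sharkovskii's theorem (period $p\geq 2$ would force period $2$, which has been excluded) to rule out all higher periods. Your argument instead telescopes the linear identity $x^{(n)}+y^{(n)}=(\beta-\mu)y^{(n-1)}+x^{(n-1)}+y^{(n-1)}$ over one full period: periodicity kills the left-hand side, $\beta\neq\mu$ forces $\sum_{i=0}^{p-1}y^{(i)}=0$, invariance of $\mathbb{R}^2_+$ under (\ref{parametr}) makes each summand nonnegative and hence zero, and the second component of the recurrence then forces $x^{(i)}=0$ as well, collapsing the orbit to the fixed point. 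This handles all $p\geq 2$ uniformly, in both regimes $\beta<\mu$ and $\beta>\mu$, with no case analysis, no reduction to one dimension, and no external theorem; what it gives up is the auxiliary structure the paper builds along the way (the normalized map $T$ on the simplex), which the authors seem to value in its own right. The one hypothesis you should make sure is explicit is the invariance of $\mathbb{R}^2_+$, which requires the parameter condition (\ref{parametr}) and not merely (\ref{par}); you do flag this, and it is the standing assumption of the section, so the proof is complete.
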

\begin{proof} This is a corollary of Theorem \ref{pr}. Here we give an alternative proof.
Let us first describe periodic points with $p=2$ on $\mathbb{R}^{2}_{+},$ in this case the equation $W_{0}(W_{0}(z))=z.$
That is
\begin{equation}\label{per.22}
\left\{
  \begin{array}{ll}
   x=\beta(\frac{\alpha x}{1+x}-\mu y+y)+(\beta y-\frac{\alpha x}{1+x}+x)(1-\frac{\alpha}{1+\beta y-\frac{\alpha x}{1+x}+x}), \\[3mm]
   y=\frac{\alpha(\beta y-\frac{\alpha x}{1+x}+x)}{1+\beta y-\frac{\alpha x}{1+x}+x}+(1-\mu)(\frac{\alpha x}{1+x}-\mu y+y).\\
  \end{array}
\right.
\end{equation}
Simple calculations show that the last equation is equivalent to the following
\begin{equation}\label{per} \frac{\alpha x}{1+x}=(\mu-2)y, \ \ \frac{\alpha x}{1+x}\geq0, \ \ (\mu-2)y\leq0.
\end{equation}

The only solution to (\ref{per}) is $x=0, y=0$. Thus the operator (\ref{systemacase1}) does not
have any two periodic point in the set $\mathbb{R}^{2}_{+}.$

Now we show that $W_0$ does not have any periodic point (except fixed).
Rewrite operator (\ref{systemacase1}) in normalized form:
\begin{equation}\label{systemnormal}
U:\left\{%
\begin{array}{ll}
    x'=\frac{(1+x)(x+\beta y)-\alpha x}{(1+x)(x+(\beta-\mu+1)y)},\\[2mm]
    y'=\frac{\alpha x+(1+x)(1-\mu)y}{(1+x)(x+(\beta-\mu+1)y)}.
\end{array}%
\right.\end{equation}
Denote $$S=\{(x,y)\in\mathbb{R}^2_{+}:x+y=1\}.$$
 From conditions (\ref{parametr}) one gets $x'\geq0$ and $y'\geq0$ moreover $x'+y'=1.$
Hence $U:S\rightarrow S.$
\begin{rk} The operators which map $S$ to itself have been extensively studied (see for example \cite{MSQ}, \cite{MS}, \cite{Rpd} and the references therein).
\end{rk}
Using $x+y=1$, from (\ref{systemnormal}) we get
\begin{equation}\label{T}
T:x'=\frac{(1-\beta)x^2+(1-\alpha)x+\beta}{(\mu-\beta)x^2+x+\beta-\mu+1}.
\end{equation}
Denote $$S^*=[0,1].$$
\begin{pro} If conditions (\ref{parametr}) are satisfied then the function $T$ (defined by (\ref{T}))  maps $S^*$ to itself.
\end{pro}
\begin{proof} We want to show that if $x\in S^*$ then $x'=T(x)\in S^*.$
Let $$a=(1-\beta)x^2+(1-\alpha)x+\beta,\ b=(\mu-\beta)x^2+x+\beta-\mu+1$$
Then $h(x)=b-a=(\mu-1)x^2+\alpha x+1-\mu.$
Since $h(0)=1-\mu>0$ and $h(1)=\alpha>0$ for each $x\in[0,1]$ there is $h(x)\geq0,$ i.e., $x'\leq1.$ By
$$a=(1-\beta)x^2+(1-\alpha)x+\beta=(1+x)(x+\beta (1-x))-\alpha x\geq0,$$ $$b=(\mu-\beta)x^2+x+\beta-\mu+1=(1+x)(x+(\beta-\mu+1)(1-x))>0$$
we have $x'\geq0,\ y'\geq0.$ Therefore under conditions (\ref{parametr}) we get $T:S^*\mapsto S^*$.
\end{proof}

Let us first describe periodic points of $U$ with $p=2$. In this case the equation $U(U(z))=z$
can be reduced to description of 2-periodic points of the function $T$ defined in (\ref{T}), i.e.,to solution of the equation
\begin{equation}\label{per.2}
T(T(x))=x.\end{equation}
Note that the fixed points of $T$ are solutions to (\ref{per.2}), to find other solution we consider the equation
$$\frac{T(T(x))-x}{T(x)-x}=0,$$
simple calculations show that the last equation is equivalent to the following
\begin{equation}\label{PER}
Ax^2+Bx+C=0.\end{equation}
where $$A=(1-\beta)(\beta-2)+(\beta-\mu+1)(\beta-\mu),$$ $$B=(\beta-2)(\beta-\mu-\alpha+2)-\beta(\beta-\mu),$$ $$C=(\beta-\mu+1)(\alpha+\mu-\beta-2)+\beta(\beta-1).$$
By (\ref{parametr}) we have $A+B+C<0$, $B<0$, $C<0$. Therefore since $x\geq 0$ the LHS of (\ref{PER}) is $<0$.
Consequently, the equation (\ref{PER}) does not have solution in $S^*$.
Thus function (\ref{T}) does not have any 2-periodic point in $S^*$.
Since $T$ is continuous on $S^*$ by Sharkovskii's theorem (\cite{D}, \cite{UR}) we have that
$T^p(x)=x$ does not have solution (except fixed) for all $p\geq 2$.
Hence it follows that the operator (\ref{systemacase1}) has no periodic points (except fixed) in the set $\mathbb{R}^2_{+}.$
\end{proof}

 \section*{Acknowledgements}

We thank both referees for their useful comments.

\end{document}